\def\tfp<#1,#2>{\ll\hskip-4pt#1 , #2\hskip-4pt\gg}
\def\tp<#1,#2,#3>{\langle#1 , #2,#3\rangle}
\def\bx{x}\def\by{y}
\def\xa{\hat{x}}\def\ya{\hat{y}}\def\wa{\hat{w}}
\def\xb{x_0}\def\yb{y_0}
\def\xc{\hat{x}_0}\def\yc{\hat{y}_0}\def\wc{\hat{w}_0}
\def\Q{\mathbb Q}
\def\D{\det A}
\theoremstyle{plain}
\newtheorem*{Theorem*}{Theorem}
\theoremstyle{definition}
\newtheorem*{Definition*}{Remark}
\begin{document}
\title{The Heisenberg 3-manifold is canonically oriented}
\author{Laurence R.~Taylor}
\address{Department of Mathematics
 \newline\indent
University of Notre Dame
Notre Dame, IN 46556, USA}
\email{taylor.2@nd.edu}
\begin{abstract}
This note describes a canonical way to orient the Heisenberg $3$-manifold.
\end{abstract}
\maketitle
The Heisenberg manifold $M^3$ is an example of a nil manifold 
with a minimal model consisting
of classes $\xa$, $\ya$, $\wa$ in degree one and 
$d \wa = \xa\wedge \ya$. 
The classes $\xa$ and $\ya$ are a basis for the $1$-cocycles. 
If $\bx$ and $\by$ are the corresponding cohomology classes 
they are a basis for $H^1$. 
The cohomology class of the cocycle $\xa\wedge\ya\wedge\wa$ 
is a generator of $H^3(M;\Q)\cong\Q$. 
An orientation of $M$ also determines a generator $[M]^\ast\in H^3(M;\Q)$. 
The \emph{canonical orientation} of $M$ is the orientation 
so that $\xa\wedge\ya\wedge\wa = r\, [M]^\ast$ with $r>0$. 
The theorem below shows the orientation is canonical. 

Define a pairing
$H^1\times H^1 \to H^3$ as follows. 
Let $\xb$ and $\yb$ be any two elements in $H^1$. 
Choose 1-cocycles $\xc$ and $\yc$ whose cohomology classes
are $\xb$ and $\yb$ respectively. 
Since $\xb\cup\yb=0\in H^2$, there exists $\wc$ such that 
$d\wc=\xb\wedge\yb$.
Define $\tfp<\xb,\yb>$ to be the cohomology class of $\xc\wedge\yc\wedge\wc$.
The next theorem shows this pairing is well-defined. 
\begin{Theorem*}
For any two elements $\xb$, $\yb$ in $H^1$, 
let $A$ be the matrix expressing these classes in terms of the basis $\bx$, $\by$.  
Then $\tfp<\xb,\yb>\, =(\det A)^2\tfp<\bx,\by>$. 
Hence the pairing is well-defined and the canonical orientation can be determined 
from any basis for $H^1(M;\Q)$.
\end{Theorem*}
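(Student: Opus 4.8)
\emph{Proof sketch.}
The plan is first to show that $\tfp<\xb,\yb>$ is independent of every choice entering its construction, and then to evaluate it with especially convenient choices. I would run the construction inside the minimal model $\Lambda(\xa,\ya,\wa)$ (any model for $M$ serves equally well, the construction being natural under maps of commutative differential graded algebras), taking $\xa$, $\ya$, $\wa$ with $d\wa=\xa\wedge\ya$ as the reference data, so that by definition $\tfp<\bx,\by>=[\xa\wedge\ya\wedge\wa]$.

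For well-definedness I would argue in three elementary steps, none of which uses anything beyond graded commutativity, the Leibniz rule, and $d\wc=\xc\wedge\yc$. If $\wc$ and $\wc'$ are two primitives of $\xc\wedge\yc$, then $z=\wc'-\wc$ is a cocycle and
\[
\xc\wedge\yc\wedge z=(d\wc)\wedge z=d(\wc\wedge z)
\]
is exact, so $[\xc\wedge\yc\wedge\wc]$ is unchanged. If $\xc$ is replaced by another representative $\xc+df$ of $\xb$, then $\wc+f\,\yc$ is a primitive of $(\xc+df)\wedge\yc$, and when $(\xc+df)\wedge\yc\wedge(\wc+f\,\yc)$ is expanded every term other than $\xc\wedge\yc\wedge\wc$ either carries the factor $\yc\wedge\yc=0$ or equals the exact form $d(f\,\yc\wedge\wc)$ --- here one uses $d(\yc\wedge\wc)=-\yc\wedge\xc\wedge\yc=0$. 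An entirely analogous computation handles a change of representative for $\yb$, and composing moves of these three types shows $[\xc\wedge\yc\wedge\wc]$ is independent of all choices. I expect this representative-independence, rather than the identity asserted by the theorem, to be the only point requiring any real care; note that it is automatic if one works literally in the minimal model, where a class in $H^1$ has a unique cocycle representative and only $\wc$ can vary.

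To obtain the formula I would then choose representatives matched to $A$. Writing $\xb=a\bx+b\by$ and $\yb=c\bx+d\by$, so that $\D=ad-bc$, the cocycles $\xc=a\xa+b\ya$ and $\yc=c\xa+d\ya$ represent $\xb$ and $\yb$, and
\[
\xc\wedge\yc=(ad-bc)\,\xa\wedge\ya=(\D)\,d\wa,
\]
so $\wc=(\D)\,\wa$ is an admissible primitive; hence
\[
\xc\wedge\yc\wedge\wc=(\D)\,\xa\wedge\ya\wedge(\D)\,\wa=(\D)^2\,\xa\wedge\ya\wedge\wa,
\]
and passing to cohomology gives $\tfp<\xb,\yb>=(\D)^2\,\tfp<\bx,\by>$. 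Finally, if $\xb$, $\yb$ form a basis of $H^1(M;\Q)$ then $A$ is invertible, so $(\D)^2>0$; thus $\tfp<\xb,\yb>$ is a positive multiple of $[\xa\wedge\ya\wedge\wa]$, and the orientation singled out by requiring $\tfp<\xb,\yb>=r\,[M]^\ast$ with $r>0$ is the canonical orientation, whatever basis was used.
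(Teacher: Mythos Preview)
Your proof is correct and follows essentially the same route as the paper's: both work in the minimal model, take $\xc=a\xa+b\ya$, $\yc=c\xa+d\ya$, compute $\xc\wedge\yc=(\D)\,\xa\wedge\ya$, and conclude $\xc\wedge\yc\wedge\wc=(\D)^2\,\xa\wedge\ya\wedge\wa$. The only difference is in handling the ambiguity in $\wc$: the paper observes that in the minimal model every admissible choice has the form $(\D)\wa+p\xa+q\ya$ and checks $\xc\wedge\yc\wedge(p\xa+q\ya)=0$ outright, whereas you argue via exactness in an arbitrary model --- extra generality you yourself note is unnecessary once one works in the minimal model.
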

\begin{proof}
Let $\xb = a\bx + b\by$ and $\yb=c\bx+d\by$ so 
$\det A=a d - b c$. 
Since the map from the 1-cocycles to $H^1$ is an isomorphism, 
the only choices for 1-cocycles are $\xc = a\xa + b\ya$ and $\yc=c\xa+d\ya$. 
Then $\xc\wedge \yc = (\D)\xa\wedge\ya$. 
Any choice for 
$\wc$ must be of the form $(\D)\wa + p\xa+q\ya$. 
Check 
$\xc\wedge\yc\wedge(p\xa+q\ya)=0$. 
Then $\xc\wedge\yc\wedge\wc=(\D)^2\xa\wedge\ya\wedge\wa$.
\end{proof}
\begin{Definition*}
The pairing $\tfp<\xb,\yb>$ is equal to
$\frac12\xb\cup\tp<\yb,\xb,\yb>$ where $\tp<\yb,\xb,\yb>$ is the Massey triple product. 
Hence the canonical orientation can also be determined using singular cohomology or de~Rahm cohomology. 
\end{Definition*}
\end{document}